\theoremstyle{theorem}
\newtheorem{theorem}{Theorem}
\newtheorem{corollary}[theorem]{Corollary}
\theoremstyle{definition}
\newtheorem{definition}[theorem]{Definition}
\newtheorem*{classicdef}{The Classic UFD Definition}
\newtheorem*{altdef}{Alternate UFD Definition}
\newtheorem*{examples}{Examples}
\newtheorem*{problem}{Open Problem}
\begin{document}

\title{On the Axioms for a \\ Unique Factorization Domain}
\markright{Axioms for a UFD}
\author{Scott T. Chapman and Jim Coykendall}

\maketitle

{\centering\textit{Dedicated to Professor William W. Smith on his 85th birthday.}\par}

\begin{abstract}
With the growing evolution of the theory of non-unique factorization in integral domains and monoids, the study of several variations to the classical unique factorization domain (or UFD) property have become popular in the literature.   Using one of these variations, the \textit{length-factorial property}, it can be shown that part of the standard classical axioms used in the definition of a UFD is essentially superfluous.
\end{abstract}
\vspace{.2in}

In an introductory abstract algebra class, the following definition plays a key role in the development of the hierarchy of integral domains (see for instance \cite{Fral}, \cite{Gal}, or \cite{Hung}).
\begin{classicdef}
Let $D$ be an integral domain.  $D$ is a \textit{unique factorization domain} (or \textit{UFD}) if each nonzero nonunit of $D$ can be written as a 
product of irreducible elements and whenever
\[
x_1\cdots x_n = y_1\cdots y_m
\]
where each $x_i$ and $y_j$ is irreducible, then \begin{enumerate}
\item $n=m$, and
\item there is a permutation $\sigma$ so that for each $1\leq i\leq n$, we have $x_i = u_i y_{\sigma(i)}$ where $u_i$ is a unit in $D$.
\end{enumerate}
\end{classicdef}
\noindent A deeper dive into the topic generates many other conditions equivalent to the UFD property (also known as the \textit{factorial} property).   Most of these equivalencies can be found in classical
sources (such as the Monthly papers \cite{PMC} and \cite{PS}) and deal with the ideal theory of $D$.  Another good general reference for UFDs is the monograph \cite{Wein}.  In this note, we show, using the relatively new property known as \textit{length-factorial} that the classic definition above can be surprisingly altered so that only irreducible factorizations of equal length have to be checked.  

Over the past 40 years, the study of factorization properties of integral domains, and more generally commutative cancellative monoids, has grown into its own discipline (for instance, see the monographs \cite{newGb} or \cite{GHKb}).  As factorization problems in rings and integral domains are actually factorization problems in the structure's associated multiplicative monoid, the literature in this area has tended to focus on the more general setting of commutative cancellative monoids.  Hence, we can define a commutative cancellative monoid  to be a  \textit{unique factorization monoid (UFM)} (or \textit{factorial monoid}) by making the necessary minor modification in The Classic UFD Definition above.  While we will show that the classical axioms for a UFD can be simplified, this simplification will not hold in the more general monoidal setting.  While the importance of ring (or domain) properties may not be reflected in the recent literature focusing on monoid factorization, these modified axioms demonstrate that ring properties do play a key role in element factorizations and highlight that classical factorization in integral domains can be a surprisingly different arena.   

If $M$ is a commutative cancellative monoid, then let $\mathcal{A}(M)$ represent its set of irreducible elements.  If every nonunit of $M$ can be written as a product of irreducible elements, then $M$ is called \textit{atomic}.  Hence, in the Classic UFD Definition above, a UFD (resp., UFM) is an atomic integral domain (resp., monoid) that satifies properties 1 and 2.  If for a commutative cancellative monoid $M$ we set $M^\times$ to be the set of units of $M$, then the quotient monoid, $M_{\text{red}}= M/M^\times$ has a unique unit.  We call $M_{\text{red}}$ the \textit{reduction} of $M$ and note that by various arguments in \cite{GHKb}, $M_{\text{red}}$ exhibits the same irreducible factorization properties as does $M$.  Hence, without loss throughout  the remainder of our work, we assume that our monoids are reduced.  Under this reduced assumption, we call an irreducible factorization $\alpha_1\cdots \alpha_n=\beta_1\cdots \beta_m$ in $M$ \textit{nondegenerate} if for each $1\leq i\leq n$, $\alpha_i\neq \beta_j$ for each $1\leq j\leq m$.
Thus, in a given nondegenerate irreducible factorization, cancellation of irreducible elements is not possible.

We proceed by focusing on two variations of the factorial property in the more general setting.  The first of our two variations was introduced 50 years ago
by Zaks \cite{Zaks}.  These domains and monoids have been a popular topic in the literature and a survey of related papers can be found in \cite{CC00}. 
\begin{definition}\label{HF}
Let $M$ be an atomic commutative cancellative monoid.  $M$ is a \textit{half-factorial monoid} if whenever 
\[
x_1\ldots x_n = y_1\ldots y_m
\]
where each $x_i$ and $y_j$ is an atom, then $n=m$.
\end{definition}

While the last definition essentially eliminates condition 2 from the UFM definition, our next new class of factorial-like monoids focuses on eliminating (in some sense) condition 1.

\begin{definition}\label{LF}  Let $M$ be an atomic commutative cancellative monoid.  $M$ is \textit{length-factorial} if whenever
\[
x_1\cdots x_n = y_1\cdots y_n
\]
where each $x_i$ and $y_j$ is an atom, then there is a permutation $\sigma$ so that $x_i = y_{\sigma(i)}$ for each $1\leq i\leq n$.
\end{definition}

\noindent As one might expect, we will call an integral domain $D$ half-factorial (resp., length-factorial) if its multiplicative monoid $D-\{0\}$ is half-factorial (resp., length-factorial). We next offer some basic examples.

\begin{examples}\label{EX}  By a well-known theorem of Carlitz \cite{Car}, any ring of algebraic integers of class number 2 is a half-factorial domain that is not a factorial domain.  
If $A$ is a proper subfield of the field $K$, then $A+XK[X] = \{f(X)\,\mid\, f(0)\in A\}$ is a half-factorial domain that is not factorial (If $\alpha\in K$ and $\alpha\not\in A$, then $X^2=X\cdot X = (\alpha X)\cdot (\alpha^{-1}X)$, see \cite[Example 3]{CC00}).  Large classes of Krull monoids are known to be half-factorial; for instance (see \cite{CKO})
\[
U = \{(x_1, x_2, x_3, x_4)\,\mid\, x_1 + x_2 = x_3 + x_4 \mbox{  with each  }x_i\in\mathbb{N}_0\}
\]
where $\mathbb{N}_0$ represents the set of nonnegative integers.
Additional constructions in the language of Dedekind domains can be found in \cite[Propositions 13, 15, and 16]{CC00}.  

With regard to the length-factorial property, any communtative cancellative monoid generated by two elements is length-factorial \cite[Corollary 3.3]{CCGS}.  This condition characterizes length-factorial monoids for the class of Puiseux monoids (additive submonoids of $\mathbb{Q}_0$ \cite[Proposition 4.25]{CGG}) which includes the class of numerical monoids (additive submonoids of $\mathbb{N}_0$).  Length-factorial Krull monoids have been recently characterized with respect to their class groups in \cite[Proposition 1.1]{GZ}.  One simple such example would be the block monoid on the cyclic group $\mathbb{Z}_3$ 
($\mathcal{B}(\mathbb{Z}_3)$), which can be expressed as a Diophantine monoid as 
\[
\{(x_1,x_2, x_3)\,\mid\, x_1+2x_2-3x_3=0\mbox{  with  } x_1, x_2, x_3\in \mathbb{N}_0\}.
\]

We close here by noting that without much additional work, the arguments in \cite{Car} can be used to show
that any ring of algebraic integers of class number greater than 2 is an atomic domain which is neither half-factorial nor length-factorial.

\end{examples} 

The next result follows directly from the three given definitions.  In Figure \ref{F1}, we offer a Venn diagram relationship of our three properties in the general monoidal setting.

\begin{theorem}\label{basicob}  Let $M$ be a commutative cancellative monoid.  Then $M$ is factorial if and only if $M$ is half-factorial and $M$ is length-factorial.
\end{theorem}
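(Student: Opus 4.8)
The plan is to prove both implications directly from the three definitions, since the real content of the statement is the observation that the two hypotheses split the factorial axioms into the ``length'' part (condition 1) and the ``matching'' part (condition 2), with the two halves glued together by the fact that the length-factorial hypothesis only ever speaks about factorizations that already have equal length.

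For the forward direction I would assume $M$ is factorial. By the Classic UFD/UFM Definition, $M$ is atomic, so the standing atomicity hypothesis of Definitions \ref{HF} and \ref{LF} is automatically met. Half-factoriality is then immediate: any equation $x_1\cdots x_n = y_1\cdots y_m$ of atoms forces $n=m$ by condition 1. Length-factoriality is equally immediate: given an equation $x_1\cdots x_n = y_1\cdots y_n$ of atoms, condition 2 supplies a permutation $\sigma$ with $x_i = y_{\sigma(i)}$ (recalling that $M$ is reduced, so the unit $u_i$ appearing in condition 2 is trivial). So this direction needs nothing beyond unwinding the definitions.

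For the reverse direction I would assume $M$ is both half-factorial and length-factorial. Each property presupposes atomicity, so $M$ is atomic. I would then take an arbitrary equation $x_1\cdots x_n = y_1\cdots y_m$ with every $x_i$ and $y_j$ an atom and apply the two hypotheses in sequence. First, half-factoriality (Definition \ref{HF}) yields $n=m$, which is exactly condition 1. With the two lengths now known to coincide, the hypothesis of length-factoriality (Definition \ref{LF}) applies verbatim and produces a permutation $\sigma$ with $x_i = y_{\sigma(i)}$, which is exactly condition 2. Hence $M$ satisfies both factorial axioms and is factorial.

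The only point that requires any care, and the reason the result is worth recording, is the \emph{order} of application in the reverse direction. The length-factorial axiom is stated solely for factorizations of equal length $n=n$, so by itself it cannot compare a length-$n$ factorization against a length-$m$ one; half-factoriality is precisely what first collapses $m$ to $n$, after which length-factoriality may be invoked. I expect no genuine obstacle beyond articulating this interlocking of the two weaker conditions, which is why I would present the argument as a short, direct verification rather than anything requiring additional machinery.
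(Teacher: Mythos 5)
Your proposal is correct and is exactly the argument the paper has in mind: the paper states that Theorem \ref{basicob} ``follows directly from the three given definitions,'' and your two-step verification (half-factoriality first to force $n=m$, then length-factoriality to produce the permutation) is that direct argument spelled out. No gaps.
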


\def\firstcircle{[fill=yellow, draw = black](0,0) circle (.8in)}
\def\secondcircle{[fill=cyan, draw = black] (0:1in) circle (.8in)}

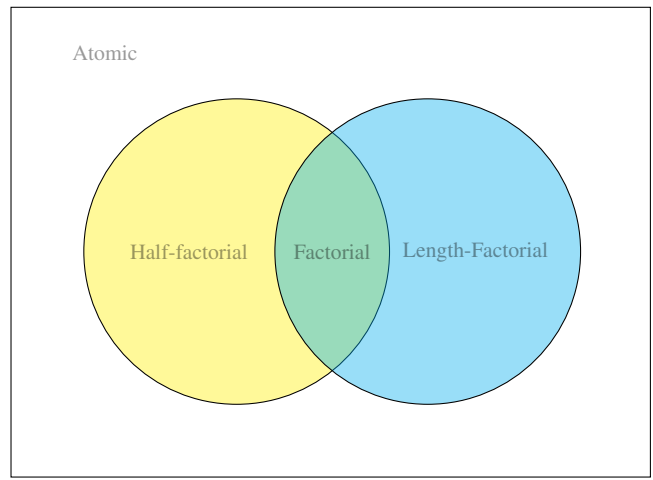
\begin{figure}[h]
\caption{The relationship concerning Factorial properties in general atomic monoids.}
\label{F1}

\begin{center}
\begin{tikzpicture}
\begin{scope} [fill opacity = .4]
\draw [draw=black] (-3,-3) rectangle (5.5,3.25); \node[] at (-1.75,2.65) {\footnotesize{Atomic}};
    \draw \firstcircle;  \node[] at (-.25in,0in) {\footnotesize{Half-factorial}};
    \draw \secondcircle;  \node[] at (1.25in,0in) {\footnotesize{Length-Factorial}}; \node [] at (.5in,0in) {\footnotesize{Factorial}};
    
    \end{scope} 
\end{tikzpicture}
\end{center}
\end{figure}
\vspace{.2in}

It follows easily using the examples cited above that relationships involving Definitions \ref{HF} and \ref{LF} (as illustrated in Figure \ref{F1}) contain no further refinements.  Our goal is to show that in the case of an integral domain, there is a significant refinement.  We will need some definitions
to describe our approach.  If we let 
$\mathcal{F}(\mathcal{A}(M))$ represent the free commutative monoid over $\mathcal{A}(M)$, then we view the factorizations of a nonunit $x\in M$ as elements in the free monoid.  In other words
\[
x=k_1x_1 + \cdots + k_tx_t \mbox{  in  } M
\]
can be viewed in $\mathcal{F}(\mathcal{A}(M))$ as 
\[
\prod_{\mathcal{A}(M)} x_i^{k_i}.
\]

If $M^\bullet$ represents the set of nonunits of $M$, then for $x\in M^\bullet$, set 
\[
Z(x)=\{ \,\prod_{\mathcal{A}(M)} x_i^{k_i} \,\mid\, \sum_{\mathcal{A}(M)} k_ix_i = x\}
\]
to be the set of irreducible (atomic) factorizations of $x$ in $M$ and 
\[ Z(M) =\{\, Z(x)\,\mid\, x\in M^\bullet\,\}\]
to be the complete set of factorizations in $M$.  Notice that there is a unique homomorphism
\[
\pi: Z(M) \rightarrow M
\] where $\pi(x)=x$ for all irreducibles $x\in \mathcal{A}(M)$ called the \textit{factorizaton homomorphism} of $M$.
We further, set
\[
\mathcal{L}(x) = \{ n\,\mid\, n = \sum_{\mathcal{A}(M)}k_i\mbox{  for some  }\prod_{\mathcal{A}(M)} x_i^{k_i} \in Z(x) \}
\]
to be the \textit{set of lengths} of $x$ in $M$.  For simplicity, if $z\in Z(x)$, we set $|z| =  \sum_{\mathcal{A}(M)}k_i$ to be the \textit{length} of the factorization $z$ of $x$.

The \emph{Betti graph} $\nabla_b$ of $b$ is the graph whose set of vertices is $Z(b)$ having an edge between factorizations $z,z' \in Z(x)$ precisely when $z$ and $z'$ share an atom.

\begin{definition}
 An element $b$ of $M$ is a \textit{Betti element} if and only if its Betti graph is disconnected.
\end{definition}

 We let $\text{Betti}(M)$ denote the set of Betti elements of $M$.  Betti elements have proven to be vital in the computation of combinatorial constants related to nonunique factorizations, especially in affine monoids (additive submonoids of $\mathbb{N}_0^k$; see \cite{delta-bf}, \cite{c-t}, \cite{CGSOP}, and \cite{single}) and Puiseux monoids (additive submonoids of $\mathbb{Q}_0$; see \cite{CJMM}, \cite{CG22}, and \cite{GG18}).  A more general treatment of Betti elements can be found in \cite{CGSOP}.  We offer some examples to illustrate the idea behind Betti elements.

\begin{examples}
We return to the monoid $U$ introduced in the previous examples.  Using the existing program structure in the GAP programming language (see \cite{numericalsgps} and \cite{gap}), we obtain that 
$\mathcal{A}(U) = \{ (1,0,0,1), (1, 0, 1, 0),  (0,1,0,1), (0,1, 1, 0)\}$ and $\text{Betti}(U) = \{(1,1,1,1)\}$.   Note that the atomic factorizations of $(1,1,1,1)$
are $(1,0,0,1)+(0,1,1,0) = (1,0,1,0) + (0,1,0,1)$. 

We now use an example that is not half-factorial.  Let $S$ be the numerical monoid generated by 11, 12, 13, 16, 17, 18, and 21 (i.e., $S=\langle 11, 12, 13, 16, 17, 18, 21\rangle$).  Again, using the GAP program, we identify the Betti elements of $S$ as
$\text{Betti}(S)= \{24, 28, 29, 30, 32, 33, 34, 35, 36, 37, 38, 39, 42, 43, 44 \}$.  We use the notation \newline $(x_1, x_2, x_3, x_4, x_5, x_6, x_7)$ to represent the factorization $x_1\cdot 11 + x_2\cdot 12 + x_3\cdot 13 + x_4 \cdot 16 + x_5 \cdot 17 + x_6 \cdot 18 + x_7\cdot 21$.  The factorizations of 43 and 45 in $S$ are as follows:
\[
\begin{array}{c|c}
43 & 45 \\[\smallskipamount]
\hline & \\
z_1=(1,0,0,2,0,0,0) & w_1= (3,1,0,0,0,0,0) \\
z_2=(2,0,0,0,0,0,1) & w_2 = (0,0,1,2,0,0,0) \\
z_3 = (0,0,2,0,1,0,0) & w_3 = (0,1,0,1,1,0,0) \\
z_4 = (0,1,1,0,0,1,0) & w_4 = (1,0,0,0, 2,0,0) \\
 & w_5 = (1,0,0,1,0,1,0) \\
 & w_6 = (0,2,0,0,0,0,1) \\
 & w_7 = (1,0,1,0,0,0,1)
 \end{array}
 \]
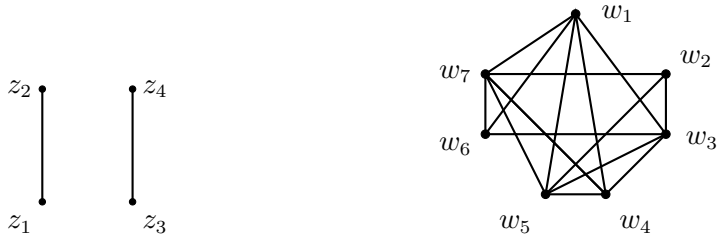
\begin{figure}[ht]
\caption{$\nabla_{43}$ (left) and $\nabla_{45}$ (right) in $S=\langle 11, 12, 13, 16, 17, 18, 21\rangle$}
\label{BettiEle}
\begin{center}
\begin{tikzpicture}[scale=.6]
\draw[fill=black] (1,2.5) circle (2pt);
\draw[fill=black] (3,2.5) circle (2pt);
\draw[fill=black] (1,5) circle (2pt);
\draw[fill=black] (3,5) circle (2pt);
\draw[thick] (1,2.5) -- (1,5);
\draw[thick] (3,2.5) -- (3,5);
\node at (.5,2) {$z_1$};
\node at (.5, 5) {$z_2$};
\node at (3.5, 2) {$z_3$};
\node at (3.5, 5) {$z_4$};
\end{tikzpicture}
\hspace{1.25in}
\begin{tikzpicture}[scale=.8]
\draw[fill=black] (1,0) circle (2pt);
\draw[fill=black] (2,0) circle (2pt);
\draw[fill=black] (0,1) circle (2pt);
\draw[fill=black] (3,1) circle (2pt);
\draw[fill=black] (0,2) circle (2pt);
\draw[fill=black] (3,2) circle (2pt);
\draw[fill=black] (1.5,3) circle (2pt);
\node at (.5,-.5) {$w_5$};
\node at (2.5, -0.5) {$w_4$};
\node at (3.6, .9) {$w_3$};
\node at (3.5, 2.25) {$w_2$};
\node at (2.2, 3) {$w_1$};
\node at (-.5, 2) {$w_7$};
\node at (-.5, .8) {$w_6$};
\draw[thick] (1,0) -- (2,0) -- (3,1) -- (3,2) -- (0,2) -- (1.5,3) -- (3,1) -- (0,1) -- (0,2) -- (1,0) -- (1.5,3) -- (2,0) -- (0,2);
\draw[thick] (0,2) -- (2,0);
\draw[thick] (0,1) -- (1.5,3);
\draw[thick] (1,0) -- (3,2);
\draw[thick] (1,0) -- (3,1);
\end{tikzpicture}

\end{center}
\end{figure}

In Figure \ref{BettiEle}, we exhibit the factorization graphs of the Betti element 43 (on the left) and the non-Betti element 45 (on the right).  
\end{examples}

The length-factorial condition is an extremely strong property and forces a strict restriction on the possible form of a nondegenerate irreducible factorization. We compile a list of conditions equivalent to length-factoriality which have appeared recently in the literature.  In the following, let $\mathcal{Q}(M)$ represent the quotient group (or Grothendieck group, see \cite[Section 2.2]{CCGS}) of $M$.  Moreover, if $\sigma$ is any equivalence relation on $M$, then $\sigma$ is called a \textit{congruence} if respects the operation of $M$ (see again \cite[Section 2.2]{CCGS}). 

\begin{theorem}\label{bigequiv}
Let $M$ be a commutative cancellative atomic monoid.  The following conditions are equivalent.
\begin{enumerate}
\item $M$ is length-factorial and not factorial.
\item  $\text{Betti}(M)=\{x\}$ and $x$ has exactly two factorizations of differing lengths into irreducible elements of $M$.
\item\label{defmfer} There exists in $M$ a nondegenerate irreducible factorization 
\begin{equation}\label{mfer} x=\pi_1^{k_1}\cdots \pi_s^{k_s} = \tau_1^{t_1}\cdots \tau_r^{t_r}\end{equation} 
where $\sum_{1\leq i\leq s} k_i > \sum_{1\leq j\leq r} t_j \vspace{3pt}$ and every nondegenerate irreducible factorization of 
$M$ is of the form $x^n$ for $n\geq 1$.
\item There exists an $x\in\mathcal{A}(M)$ such that $\mathcal{A}(M)\backslash \{a\}$
and $a- \mathcal{A}(M)\backslash \{a\}$ are integrally independent sets in $\mathcal{Q}(M)$.
\item The congruence $\text{ker}\, \pi=\{\, (z_1,z_2)\in Z(M)^2\,\mid\, \pi(z_1)=\pi(z_2)\,\}$ on $Z(M)$ is cyclic and nontrivial.
\end{enumerate}
\end{theorem}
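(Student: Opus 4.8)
The plan is to route all five conditions through a single linear-algebra object: the \emph{relation lattice} $\Lambda = \ker\bigl(\mathbb{Z}^{(\mathcal{A}(M))} \to \mathcal{Q}(M)\bigr)$, where $\mathbb{Z}^{(\mathcal{A}(M))}$ is the free abelian group on the atoms and the map sends each basis vector to the corresponding atom in $\mathcal{Q}(M)$. Splitting a vector $n \in \Lambda$ into its positive and negative parts $n = n^+ - n^-$ (with disjoint support) exhibits exactly the nondegenerate factorizations of $M$: the element $\pi(n^+) = \pi(n^-)$ has the two factorizations $n^+$ and $n^-$, which share no atom. I would pair this with the augmentation (total-degree) map $\varepsilon \colon \mathbb{Z}^{(\mathcal{A}(M))} \to \mathbb{Z}$, $\varepsilon(n) = \sum_\alpha n_\alpha$, so that $\varepsilon(n^\pm) = |n^\pm|$ records factorization lengths. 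The key lemma is that $M$ is length-factorial if and only if $\Lambda \cap \ker\varepsilon = \{0\}$: a nonzero element of the intersection yields two equal-length factorizations with disjoint support, contradicting Definition~\ref{LF}, and conversely two same-length factorizations of an element differ by an element of $\Lambda\cap\ker\varepsilon$. By Theorem~\ref{basicob}, $M$ fails to be factorial exactly when it is not half-factorial, i.e. exactly when some $n \in \Lambda$ has $\varepsilon(n) \neq 0$, so $\Lambda \not\subseteq \ker\varepsilon$. Putting these together, condition~(1) holds if and only if $\varepsilon$ restricts to an injection $\Lambda \hookrightarrow \mathbb{Z}$ with nonzero image, that is, if and only if $\Lambda$ is infinite cyclic, $\Lambda = \mathbb{Z}\cdot R$, generated by a single relation $R$ with $\varepsilon(R) \neq 0$.

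From this normal form, (3) and (5) are almost immediate. Writing $R = R^+ - R^-$ and choosing the sign of $R$ so that $\varepsilon(R) > 0$, the element $x = \pi(R^+) = \pi(R^-)$ and the factorizations $R^+, R^-$ give equation~(\ref{mfer}) with $\sum_i k_i = |R^+| > |R^-| = \sum_j t_j$; and since every element of $\Lambda$ is $mR$, every nondegenerate factorization of $M$ is $(R^+)^m = (R^-)^m$, i.e. a factorization of $x^m$, which is the content of~(3). For~(5), the congruence $\ker\pi$ is generated as a congruence by the single pair $(R^+,R^-)$ precisely when $\Lambda$ has rank one, and it is nontrivial precisely when $\Lambda \neq \{0\}$; I would make the correspondence between ``cyclic congruence'' and ``rank-one lattice'' explicit by checking that any two nonnegative monomials differing by $mR$ are joined by a chain of $\pm R$ moves staying in the nonnegative cone. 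This is the one place where the passage from abelian-group generation to congruence generation must be argued rather than quoted.

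For~(2) I would invoke the structural fact that $\ker\pi$ is generated by the relations occurring at the Betti elements of $M$ (as developed in \cite{CGSOP}). If $\Lambda = \mathbb{Z}\cdot R$, a direct count shows the element $x$ above has exactly the two factorizations $R^+$ and $R^-$: any factorization of $x$ is $R^+ - mR$ for some $m \in \mathbb{Z}$, and nonnegativity of the exponents forces $m \in \{0,1\}$. These two factorizations share no atom, so $x$ is a Betti element; meanwhile for $n \geq 2$ the factorizations of $x^n$ form the chain $(n-j)R^+ + jR^-$ with $0 \le j \le n$, in which consecutive terms share atoms, so $x^n$ is not Betti and $\text{Betti}(M) = \{x\}$. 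Conversely, a unique Betti element with exactly two factorizations of differing lengths forces $\ker\pi$ to be generated by a single length-changing relation, returning us to the cyclic normal form and hence to~(1). The main obstacle I anticipate is precisely this uniqueness-and-count for the Betti element: one must rule out additional Betti elements and additional factorizations of $x$, which is where the ``$\ker\pi$ is generated at Betti elements'' input and the nonnegativity bookkeeping do the real work.

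Finally, for~(4) I would translate the two integral-independence requirements into lattice statements and match them to the normal form, being careful with the definition of ``integrally independent'' (and noting the apparent $x$/$a$ discrepancy in the displayed statement). Independence of $\mathcal{A}(M)\setminus\{a\}$ says no nonzero element of $\Lambda$ is supported off $a$, i.e. the $a$-coordinate projection $\Lambda \to \mathbb{Z}$ is injective, forcing $\Lambda$ to have rank at most one with its generator supported at $a$; independence of $a - (\mathcal{A}(M)\setminus\{a\})$ unwinds, via $\varepsilon$, to exactly $\Lambda \cap \ker\varepsilon = \{0\}$, the length-factorial condition. Reconciling the precise independence hypotheses with the rank-one-plus-nontrivial picture, so that (4) lands on \emph{length-factorial and not factorial} and excludes the factorial case, is the second delicate point, and here I would follow the formulation in \cite{CCGS} to fix the exact sets involved.
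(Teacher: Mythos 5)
The paper does not actually prove this theorem from scratch: its ``proof'' is a set of citations (\cite{CS11} for $1\Leftrightarrow 3$, \cite{CGSOP} for $1\Leftrightarrow 2$, \cite{CCGS} for $1\Leftrightarrow 4\Leftrightarrow 5$). Your relation-lattice framework is therefore a genuinely different and more unified route, and its core is sound: identifying nondegenerate factorizations with the positive/negative parts of elements of $\Lambda$, the lemma that length-factoriality is $\Lambda\cap\ker\varepsilon=\{0\}$, the use of Theorem~\ref{basicob} to convert ``not factorial'' into $\Lambda\not\subseteq\ker\varepsilon$, and the resulting normal form $\Lambda=\mathbb{Z}\cdot R$ with $\varepsilon(R)\neq 0$ are all correct, and (3) and (2) do follow from that normal form. (For (2), note that to get $\text{Betti}(M)=\{x\}$ you must run the consecutive-terms-share-an-atom argument along the factorization chain $z, z-R,\dots$ of an \emph{arbitrary} element, not just of powers of $x$; the same nonnegativity bookkeeping does it, since two nonnegative vectors differing by $R$ have disjoint supports only when they equal $R^+$ and $R^-$.)

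The genuine gap is in $(5)\Rightarrow(1)$. You translate ``$\ker\pi$ is cyclic and nontrivial'' into ``$\Lambda$ has rank one and is nonzero,'' and under the natural reading of ``generated as a congruence by a single pair'' that translation is accurate --- but it is strictly weaker than (1). The paper's own monoid $U$ is the counterexample: writing its atoms as $a_1=(1,0,0,1)$, $a_2=(1,0,1,0)$, $a_3=(0,1,0,1)$, $a_4=(0,1,1,0)$, the relation lattice is $\mathbb{Z}(e_{a_1}+e_{a_4}-e_{a_2}-e_{a_3})$, rank one and nonzero, and your own chain-of-$\pm R$-moves argument shows $\ker\pi_U$ is generated by the single pair $(e_{a_1}+e_{a_4},\, e_{a_2}+e_{a_3})$; yet $U$ is half-factorial and not length-factorial because its unique relation preserves length ($\varepsilon(R)=0$). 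So under your reading, (5) holds for $U$ while (1) fails; the missing ingredient is precisely $\varepsilon(R)\neq 0$, i.e., that the generating pair has sides of different lengths, which is not recoverable from ``cyclic and nontrivial'' as you have interpreted it. You must either import the exact (stronger) definition of cyclic from \cite{CCGS} or build the length condition into the generating pair explicitly. A parallel caveat applies to (4), which you defer rather than prove: as displayed, a factorial monoid with at least one atom satisfies both independence conditions vacuously (since $\Lambda=\{0\}$), so your lattice translation cannot close that loop until the source's precise formulation is fixed. These are real gaps, but they are localized to (4) and (5) and stem partly from imprecision in the paper's own statement, which you correctly flag.
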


The factorization \eqref{mfer} is referred to in \cite{CS11} as the \textit{master factorization} of $M$.  We also note the importance in condition 2 that the Betti element has exactly two irreducible factorizations of differing length.  The monoid $U$ introduced in our first set of examples has a unique Betti element, but is not length-factorial.  

\begin{proof} We note that the equivalence of 1 and 3 is due to \cite[Proposition 2.9]{CS11}, the equivalence of 1 and 2 is due to \cite[Theorem 4.7]{CGSOP}, and the equivalence of 1, 4, and 5 is established in \cite[Theorem 3.1]{CCGS}.  The reader is directed to those sources for precise proofs.  
\end{proof}



Based on the master factorization property, we can make an important deduction concerning length-factorial monoids that are not factorial.  Some past definitions from the literature will be of interest here.  An integral domain $D$ which is atomic and has finitely many atoms is called a \textit{Cohen-Kaplansky domain} (first studied in \cite{CK46} and then more extensively in \cite{CK} in which the terminology was defined).  In a broader sense, an atomic domain $D$ which has finitely many nonassociated nonprime irreducible elements is known as a \textit{generalized Cohen-Kaplansky domain} (or \textit{generalized CK-domain}) and are explored in detail in \cite{AAZ}.  We extend these definitions to atomic monoids in the obvious manner. 

\begin{corollary}\label{conseqlf}  Let $M$ be an atomic monoid which is length-factorial and not factorial with master factorization \eqref{mfer}.  Every nonprime irreducible of $M$ appears in \eqref{mfer}, and hence $M$ must have finitely many nonprime irreducible elements.  Consequently, $M$ is a generalized CK-monoid.
\end{corollary}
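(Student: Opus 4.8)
The plan is to prove the contrapositive of the first assertion: any atom of $M$ that does \emph{not} occur in \eqref{mfer} must be prime. Granting this, every nonprime irreducible is forced to lie among the finitely many atoms $\pi_1,\dots,\pi_s,\tau_1,\dots,\tau_r$ appearing in the master factorization, so $M$ has at most $s+r$ nonprime irreducibles; since $M$ is reduced (so that ``nonassociated'' means distinct) and atomic by hypothesis, this is exactly the definition of a generalized CK-monoid. The engine driving the argument is Theorem~\ref{bigequiv}(\ref{defmfer}): because $M$ is length-factorial and not factorial, \emph{every} nondegenerate irreducible factorization of $M$ equals $x^{n}$ for some $n\ge 1$, and the atoms appearing in $x^{n}=\pi_1^{nk_1}\cdots\pi_s^{nk_s}=\tau_1^{nt_1}\cdots\tau_r^{nt_r}$ are precisely those occurring in \eqref{mfer}.

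The key step I would isolate as a lemma: if an atom $q$ is nonprime, then $q$ occurs in some nondegenerate irreducible factorization of $M$. To prove it, I first record the elementary fact that in a reduced atomic monoid an atom $q$ divides an element $a$ if and only if $q$ occurs in at least one factorization of $a$; hence $q\nmid a$ means $q$ occurs in \emph{no} factorization of $a$. Now suppose $q$ is nonprime, so there are $a,b\in M$ with $q\mid ab$ but $q\nmid a$ and $q\nmid b$. Choosing any factorizations $z_a$ of $a$ and $z_b$ of $b$, the product $z_a z_b$ is a factorization of $ab$ in which $q$ does not occur; on the other hand, writing $ab=qc$ and factoring $c$ as $z_c$, the factorization $q\,z_c$ of $ab$ contains $q$. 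Cancelling the atoms common to these two factorizations of $ab$ yields a nondegenerate irreducible factorization, and since $q$ occurs on one side while having multiplicity $0$ on the other, $q$ survives the cancellation and appears in the resulting nondegenerate factorization.

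Assembling these pieces finishes the proof: a nonprime atom $q$ produces, via the lemma, a nondegenerate irreducible factorization containing $q$; by Theorem~\ref{bigequiv}(\ref{defmfer}) that factorization is a power $x^{n}$ whose atoms all occur in \eqref{mfer}, so $q\in\{\pi_1,\dots,\pi_s,\tau_1,\dots,\tau_r\}$. Thus every nonprime irreducible appears in \eqref{mfer}, the set of nonprime irreducibles is finite, and $M$ is a generalized CK-monoid. I expect the main obstacle to be the lemma, specifically verifying that $q$ genuinely survives the cancellation: one must confirm that the two exhibited factorizations of $ab$ are compared as multisets of atoms, that $q$ has multiplicity zero in $z_a z_b$ (which is where $q\nmid a$ and $q\nmid b$ are used), and that after removing the common part both sides remain nonempty, so that a \emph{bona fide} nondegenerate factorization — rather than a trivial identity — results.
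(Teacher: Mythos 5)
Your proposal is correct and follows essentially the same route as the paper: show that every nonprime atom must occur in some nondegenerate irreducible factorization, then invoke Theorem~\ref{bigequiv}(\ref{defmfer}) to conclude that such a factorization is a power of the master factorization \eqref{mfer}, so the atom is one of $\pi_1,\dots,\pi_s,\tau_1,\dots,\tau_r$. The only difference is that the paper compresses the first step into the phrase ``by definition,'' whereas you supply the (correct) justification via the cancellation argument comparing the two factorizations of $ab$.
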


\begin{proof}
By definition, every nonprime irreducible of $M$ appears in a nondegenerate irreducible factorization, so it must appear somewhere in the master factorization.  Hence, there can be at most finitely many nonassociated nonprime irreducibles. 
\end{proof}




The last corollary allows us to improve Theorem \ref{basicob} for integral domains.

\begin{theorem}\label{thebigone}
An integral domain $D$ is factorial if and only if it is length-factorial.
\end{theorem}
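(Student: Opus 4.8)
The plan is to dispose of the forward implication in one line and to attack the converse by contradiction: from length-factoriality I would extract the rigid ``master factorization'' structure supplied by Theorem \ref{bigequiv}, and then break it using the one feature a bare monoid does not have, namely the additive structure of the ring. The forward direction is immediate, since if $D$ is factorial then Theorem \ref{basicob} makes $D$ both half-factorial and length-factorial, hence in particular length-factorial.

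For the converse, suppose $D$ is length-factorial but not factorial, and write $M$ for the reduced multiplicative monoid $D\setminus\{0\}$. Then $M$ falls under case~1 of Theorem \ref{bigequiv}, so I may use cases~2 and~3 at once: $\mathrm{Betti}(M)=\{x\}$, the element $x$ has exactly two factorizations (so $|Z(x)|=2$), and by \eqref{mfer} these are $x=\pi_1^{k_1}\cdots\pi_s^{k_s}=\tau_1^{t_1}\cdots\tau_r^{t_r}$ with $\sum k_i>\sum t_j$ and with the two sides sharing no atom. As observed after Theorem \ref{bigequiv}, no $\pi_i$ or $\tau_j$ can be prime, since a prime atom on one side would be forced by nondegeneracy to coincide with an atom on the other side; and by Corollary \ref{conseqlf} these finitely many atoms are precisely the nonprime irreducibles of $D$.

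The heart of the argument is to produce a third factorization of $x$, which is absurd because $|Z(x)|=2$. Since $\pi_i\mid x$ and $\tau_j\mid x$ in $D$ for all $i,j$, both $x/\pi_i$ and $x/\tau_j$ lie in $D$, and addition lets me splice them: if some opposite-side pair satisfies a B\'ezout relation $u\pi_i+v\tau_j=1$, then
\[
\frac{x}{\pi_i\tau_j}=\frac{x(u\pi_i+v\tau_j)}{\pi_i\tau_j}=u\,\frac{x}{\tau_j}+v\,\frac{x}{\pi_i}\in D,
\]
so $\pi_i\tau_j\mid x$. Factoring the cofactor $x/(\pi_i\tau_j)$ into irreducibles then yields a factorization of $x$ that contains both $\pi_i$ and $\tau_j$; as $\tau_j$ is associate to no $\pi$ and $\pi_i$ to no $\tau$, this factorization equals neither $\pi_1^{k_1}\cdots\pi_s^{k_s}$ nor $\tau_1^{t_1}\cdots\tau_r^{t_r}$, giving the contradiction. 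This is exactly the step that has no monoid analogue: the known length-factorial, non-factorial \emph{monoids} survive precisely because the ``element'' $x/(\pi_i\tau_j)$ need not exist.

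The main obstacle is therefore to guarantee that some $\pi_i$ and $\tau_j$ are comaximal, equivalently that no maximal ideal of $D$ contains an atom from each side of the master factorization; this is where I expect the real work to lie. Here Corollary \ref{conseqlf} should earn its keep: because the nonprime irreducibles are finite in number, only finitely many maximal ideals are relevant, and one can try to rule out a common one directly. One natural route, if a maximal ideal $\mathfrak m$ does contain atoms from both sides, is to localize at $\mathfrak m$, turning the atoms outside $\mathfrak m$ into units and thereby strictly shortening the master factorization, and then to induct on the finite number of nonprime irreducibles until the comaximal case already treated is reached. Verifying that such a localization stays length-factorial and genuinely decreases this invariant is, I expect, the technical crux that must be handled with care.
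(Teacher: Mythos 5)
Your forward direction and your use of Theorem \ref{bigequiv} and Corollary \ref{conseqlf} are fine, and the B\'ezout splicing is genuinely correct as far as it goes: if some $\pi_i$ and $\tau_j$ with $u\pi_i+v\tau_j=1$ exist, then $x/(\pi_i\tau_j)=u\,(x/\tau_j)+v\,(x/\pi_i)\in D$ does produce a third element of $Z(x)$, contradicting $|Z(x)|=2$. The gap is that the comaximal case you treat is not the hard case --- it is a case that can simply fail to occur, and your proposed route for reaching it cannot work. Since $x=\pi_1^{k_1}\cdots\pi_s^{k_s}=\tau_1^{t_1}\cdots\tau_r^{t_r}$, every maximal ideal containing some $\pi_i$ contains $x$ and hence some $\tau_j$, so there is always a maximal ideal meeting both sides; worse, nothing prevents \emph{all} the atoms of the master factorization from lying in a single maximal ideal. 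Your induction is designed to shrink the problem by localizing at such an $\mathfrak m$, but its terminal state is precisely a local length-factorial non-factorial CK-domain, and in a local domain no two nonunits are ever comaximal. So the induction bottoms out exactly where your only contradiction-producing mechanism is unavailable, and you have no argument for that case. (There are also unverified claims along the way --- that length-factoriality passes to the localizations you use, and that the localization strictly decreases your invariant --- but these are secondary to the missing local case.)

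This is in fact where the paper's proof does all of its real work. It performs the same reduction you envision (localize away the primes to get a CK-domain, then use the Cohen--Kaplansky structure theorems to reduce to the local case), but then handles the local case by passing to the integral closure $\overline{R}$, which is a local PID with maximal ideal $(\pi)$ and satisfies $y^n\in R$ for all $y\in\overline{R}$. Writing two non-associate irreducibles as $\alpha=u\pi^a$, $\beta=v\pi^b$ with $a\le b$ minimal, either $a=b$ and $\alpha^n$, $\beta^n$ are associates in $R$ (killing length-factoriality), or $a<b$ and $\alpha+\beta=\pi^a(u+v\pi^{b-a})$ is again irreducible with the same minimal $\pi$-value, forcing $\alpha$ and $\alpha+\beta$ to be associates and hence $\alpha\mid\beta$. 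Note that this, like your splicing, exploits the additive structure --- but via $\alpha+\beta$ inside the local ring rather than via a B\'ezout identity, which is exactly the tool that survives in the local setting where comaximality does not. To repair your proof you would need an argument of this kind for the local case; the comaximal branch alone does not suffice.
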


The original proof of this result from \cite{CS11} considers cases predicated on the form of the master factorization on a case-by-case basis.  We offer a different proof using first principles from commutative algebra which reduces $D$ to the much simpler local case and exploits some basic properties of CK-domains.

\begin{proof}  If $D$ is factorial, then clearly $D$ is length-factorial.  Hence, we assume that $D$ is length-factorial and assume for the sake of contradiction that it is not factorial. 

Since $D$ is length-factorial, Corollary \ref{conseqlf} shows that there are only finitely many nonprime irreducibles (and at least one, as $D$ is not factorial), and so if $S$ is the multiplicative set of all elements that can be factored into primes (or units) of $D$, then 
\[R:=D_S=\{\,\frac{a}{b}\,\mid\, a\in D\mbox{ and }b\in S\,\}\] 
is still length-factorial and is also a CK-domain. To further this reduction, we note that \cite[Theorem 2 and Theorem 7]{CK46} show that 
\begin{itemize}
    \item $R$ has finitely many maximal ideals, 
    \item any irreducible of $R$ lies in only one prime (maximal) ideal of $R$, and 
    \item there is a one-to-one correspondence between irreducibles of $R$ contained in the maximal ideal $M$ and the irreducibles of the localization $R_M$ (up to associates in each ring).
    \end{itemize}
    These reductions allow us to assume that we have a non-factorial, length-factorial domain $R$ that is local (i.e., has a unique maximal ideal). Additionally, \cite[Theorem 4.3]{CK} shows that if $R$ is a local CK-domain, then the integral closure, $\overline{R}$, of $R$ is a local PID and there is an $n\in\mathbb{N}$ such that $x^n\in R$ for all $x\in\overline{R}$.

 Using the notation above, we let $R$ be our local length-factorial CK-domain, $\overline{R}$ its integral closure, a PID with maximal ideal generated by $\pi$, and $n\in\mathbb{N}$ such that $x^n\in R$ for all $x\in\overline{R}$. If $\alpha$ and $\beta$ are nonzero nonunits in $\overline{R}$, we can write 
 \[\alpha=u\pi^a\mbox{ and }\beta=v\pi^b\] 
 where $u,v$ are units in $\overline{R}$ and $a,b\in\mathbb{N}$. So we select $\alpha$ and $\beta$ non-associate irreducibles such that $a$ and $b$ are minimized. We first observe that if $a=b$, then $\alpha^n$ is associated (in $R$) with $\beta^n$ which contradicts length-factoriality. If, on the other hand (and without loss of generality), we have $a<b$, then we call on the additive structure of $R$ (which bifurcates from the smoother ``monoid only" case). Note that $\alpha+\beta$ can be written (as an element of $\overline{R}$) as $\pi^a(u+v\pi^{b-a})$. Since $\overline{R}$ is also local, $u+v\pi^{b-a}$ is a unit in $\overline{R}$ and by the minimality of $a$, $\alpha+\beta$ is also irreducible.  Thus, $\alpha$ and $\alpha + \beta$ are irreducibles of $R$, and since $a<b$ are minimal, this means $\alpha$ and $\alpha+\beta$ must be associated in $R$ and hence $\alpha$ divides $\beta$ which is our desired contradiction.
\end{proof}

Here is an alternate proof of Theorem \ref{thebigone} which again uses the properties of a generalized CK-domain as well as some past results from the theory of nonunique factorizations.  A atomic monoid $M$ is said to be \textit{square-factorial} if for atoms $u, v$, and $w$ of $M$, $u^2=vw$ implies that $u$ and $v$ are associates (see \cite[p. 207]{ACHKZ}).  Notice that if $M$ is not square-factorial, then it is not length-factorial.  If $D$ is an atomic length-factorial domain, then Corollary \ref{conseqlf} establishes that it is a generalized CK-domain.  A generalized CK-domain that is not factorial is not square-factorial by \cite[Theorem 4.6]{ACHKZ}.  This completes the argument.  


\def\secondcirclea{[fill=cyan, draw = black] (.85,0) circle (.5in)}
\def\firstcirclea{[fill=yellow, draw = black](.85,0) circle (1in)}

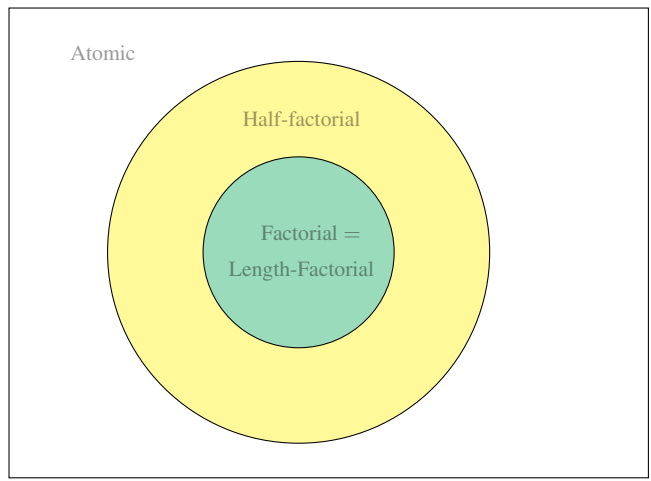
\begin{figure}[h]
\caption{The relationship concerning Factorial properties in atomic integral domains.}

\begin{center}
\begin{tikzpicture}
\begin{scope} [fill opacity = .4]
\draw [draw=black] (-3,-3) rectangle (5.5,3.25); \node[] at (-1.75,2.65) {\footnotesize{Atomic}};
    \draw \firstcirclea;  \node[] at (.35in,.7in) {\footnotesize{Half-factorial}};
    \draw \secondcirclea;  \node[] at (.35in,-.1in) {\footnotesize{Length-Factorial}}; \node [] at (.4in,.1in) {\footnotesize{Factorial  }$=$};
    
  \end{scope} 
\end{tikzpicture}
\end{center}
\end{figure}

As a corollary to Theorem \ref{thebigone}, we obtain that a non-factorial integral domain always contains a nondegenerate irreducible factorization of equal length.

\begin{corollary}[Bonus]  Let $D$ be an atomic integral domain which is not factorial.  There exists a positive integer $n\geq 2$ and irreducible elements $\alpha_1, \ldots, \alpha_n, \beta_1, \ldots, \beta_n$ such that 
\[
\alpha_1 \cdots \alpha_n= \beta_1 \cdots \beta_n
\]
is a nondegenerate irreducible factorization in $D$.
\end{corollary}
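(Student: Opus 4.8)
The plan is to read the statement off of Theorem~\ref{thebigone} together with Definition~\ref{LF}. Since $D$ is not factorial, Theorem~\ref{thebigone} shows that $D$ is not length-factorial. Negating Definition~\ref{LF} (interpreted up to associates, as in the reduced setting) then supplies atoms $x_1,\dots,x_N,y_1,\dots,y_N$ of equal length $N$ with
\[
x_1\cdots x_N = y_1\cdots y_N
\]
for which no permutation matches the $x_i$ with the $y_j$ up to associates; equivalently, the multiset of associate classes $X=\{[x_1],\dots,[x_N]\}$ differs from $Y=\{[y_1],\dots,[y_N]\}$ while $|X|=|Y|=N$.

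From here the strategy is simply to cancel shared atoms. I would pass to the reduced monoid $D_{\text{red}}$, which is legitimate by the paper's earlier reduction and turns the two sides into genuine multisets of atoms. Deleting the multiset intersection $X\cap Y$ from both sides leaves multisets $X'$ and $Y'$ that are disjoint by construction and of equal size $n:=N-|X\cap Y|$. Because $X\neq Y$ but $|X|=|Y|$, neither $X'$ nor $Y'$ can be empty, so $n\geq 1$; moreover $n=1$ is impossible, as it would force the single remaining atom on each side to be equal, hence associate, contradicting disjointness. Thus $n\geq 2$, and cancelling (valid since $D_{\text{red}}$ is cancellative) yields a factorization
\[
\alpha_1\cdots\alpha_n = \beta_1\cdots\beta_n
\]
of equal length $n\geq 2$ in $D_{\text{red}}$ in which no $\alpha_i$ is associate to any $\beta_j$, i.e.\ a nondegenerate one.

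The last step is to lift this factorization back to $D$ itself. Choosing atom representatives $\alpha_i,\beta_j\in D$ of the classes above makes the products $\alpha_1\cdots\alpha_n$ and $\beta_1\cdots\beta_n$ associate in $D$; absorbing the resulting unit into $\beta_1$ (which leaves its associate class, and hence the nondegeneracy, intact) produces the exact equality demanded by the statement.

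The argument is conceptually short, and I expect the only mild obstacles to be bookkeeping: tracking units during cancellation and during the lift from $D_{\text{red}}$ to $D$. Working inside $D_{\text{red}}$ is precisely what removes the unit nuisance from the cancellation step and confines it to the single, routine lifting step. The one genuinely load-bearing point is that the failure of length-factoriality delivers a bad factorization of \emph{equal} length rather than of differing lengths; once equal length is in hand, cancelling common atoms cannot exhaust either side, which is exactly what guarantees a nondegenerate witness of equal length and constitutes the whole content of this ``bonus.''
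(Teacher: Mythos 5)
Your proposal is correct and is precisely the argument the paper intends: the corollary is stated as an immediate consequence of Theorem~\ref{thebigone} (not factorial implies not length-factorial, hence an equal-length factorization violating Definition~\ref{LF} exists), and your cancellation of common associate classes to reach a nondegenerate equal-length factorization with $n\geq 2$ fills in the routine details correctly.
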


In fact, we have now shown that the following is an equivalent defintion of the UFD property.

\begin{altdef}
Let $D$ be an integral domain in which every nonzero nonunit of $D$ can be factored as a product of irreducibles.  $D$ is a \textit{UFD} if whenever
\[
x_1\cdots x_n = y_1\cdots y_n
\]
where each $x_i$ and $y_j$ is irreducible, then there is a permutation $\sigma$ so that for each $1\leq i\leq n$, we have $x_i = u_iy_{\sigma(i)}$ where $u_i$ is a unit in $D$.
\end{altdef} 

We note that the initial condition in both the Classical and Alternate Definitions, that the domain be atomic, is essential.  While not emphasized in an introductory course, there are relatively simple examples of integral domains that fail this property.  One such example, 
\[
\mathbb{Z} + X\mathbb{Q}[X] = \{ f(X)\,\mid\, f(X)\in \mathbb{Q}[X]\mbox{  and  }f(0)\in \mathbb{Z}\},
\]
can be found (with a detailed explanation) in \cite[Example p. 166]{ChapMM}.

Can the Alternate UFD Definition be further improved?  In \cite{ACHKZ} the authors explore several types of monoids and integral domains which generalize the factorial property.  Here is one that is particularly relevant to our current discussion.  

\begin{definition}  Let $M$ be an atomic commutative cancellative monoid and suppose that $n\geq 2$ is a positive integer.  We denote $M$ to be \textit{quasi-}$n$\textit{-factorial} if whenever
\[
\alpha_1\cdots \alpha_n=\beta_1\cdots \beta_n\]
for atoms $\alpha_1,\ldots, \alpha_n,\beta_1, \ldots, \beta_n$, then there is a permutation $\sigma$ so that $\alpha_i$ and $\beta_{\sigma(i)}$ are associates.   
\end{definition}

Notice by the definition (and its obvious extension) that an atomic monoid $M$ is length-factorial if and only if it is quasi-$n$-factorial for all positive integers $n>1$.  There are several examples constructed in \cite{ACHKZ} of atomic monoids that are quasi-$n$-factorial and neither factorial nor length-factorial (see Examples 2.3, 2.4, and 2.5).  The paper goes on to explore conditions on Krull (and more specifically Dedekind) domains which would force such a domain to be quasi-$n$-factorial and not factorial.  No specific construction is offered and the existence of such a domain remains open.  Hence, we close with the following problem.

\begin{problem}  Does there exist an atomic integral domain that is quasi-$n$-factorial for some $n\geq 2$ but not factorial?
\end{problem}

\noindent Notice that if the answer is ``no,'' then the $n$ in the Alternate UFD Definition could be changed to any specific integer you like (including 2)! 

\medskip\medskip 

\noindent\textsc{Funding:} No Funding was granted to the information in this paper.
\medskip

\noindent\textsc{Author Contribution Statement:} All the authors, Scott T. Chapmand and Jim Coykendall, contributed to this manuscript, wrote the manuscript and were involved with project concept and submission.  They ensure that they meet the criteria for authorship as per the ICMJE guidelines.
\medskip

\noindent\textsc{Conflict of Interst Statement:} Scott Chapman reports that he has no conflicts of interest.  James Coykendall reports that he has no conflicts of interest.

\begin{acknowledgment}{Acknowledgment.}
The authors thank Pedro Garc\'ia-S\'anchez and Alfred Geroldinger for discussions related to this work.
\end{acknowledgment}

\begin{biog}
\item[Scott T. Chapman] is a Texas State University System Regents’ Professor and SHSU Distinguished Professor
at Sam Houston State University in Huntsville, Texas. He served as Editor of the \textit{American Mathematical Monthly} during the period 2012-2016.  He is currently serving as Editor-in-Chief at \textit{Communications in Algebra}.  His editorial
work, numerous publications in the area of non-unique factorizations, and years of
directing REU Programs, led to his designation in 2017 as a Fellow of the American
Mathematical Society.
\begin{affil}
Department of Mathematics and Statistics, Sam Houston State University, Box 2206, Huntsville, TX  77341\\
scott.chapman@shsu.edu
\end{affil}

\item[James B. (Jim) Coykendall] received his BS from Caltech and a Ph. D. degree in Algebraic Number Theory from Cornell.   He has held Full Professor appointments at North Dakota State University and Clemson University and served as Department Chair at both institutions.   He has about 60 publications in various areas of commutative algebra and number theory.  He is the founding Editor and current Editor-in-Chief of \textit{Journal of Commutative Algebra}, and has also served on the Editorial Boards at both \textit{The Rocky Mountain Journal} and \textit{Communications in Algebra}.  He is the recipient of numerous teaching awards, including being named the Carnegie Foundation’s Teacher of the Year in North Dakota in 2005.   For more than 20 years, he has also made a significiant contribution to the management of the Mathematical Genealogy Project and played a vital role in forging a permanent agreement with the AMS to manage the Project in 2010. 
\begin{affil}
Department of Mathematics and Statistical Sciences, Clemson University, Clemson, SC  29634\\
jcoyken@clemson.edu
\end{affil}
\end{biog}
\vfill\eject

\end{document}